\newtheorem{thm}{Theorem}[section]
\newtheorem{lem}[thm]{Lemma}
\newtheorem{prop}[thm]{Proposition}
\newtheorem{cor}[thm]{Corollary}
\theoremstyle{definition}
\newtheorem*{rem}{Remark}
\newcommand{\N}{\mathbb N}
\newcommand{\R}{\mathbb R}
\newcommand{\cA}{\mathcal{A}}
\newcommand{\cB}{\mathcal{B}}
\newcommand{\cF}{\mathcal{F}}
\newcommand{\cU}{\mathcal U}
\newcommand{\cV}{\mathcal V}
\newcommand{\cS}{\mathcal S}
\newcommand{\vp}{\varepsilon}
\newcommand{\bx}{\ensuremath{\boldsymbol{x}}}
\newcommand{\by}{\ensuremath{\boldsymbol{y}}}
\newcommand{\bz}{\ensuremath{\boldsymbol{z}}}
\newcommand{\spa}{{\rm span}}
\newcommand{\dist}{{\rm{dist}}}
\def \eps{\varepsilon}
\def\hangbox to #1 #2{\vskip1pt\hangindent #1\noindent \bhox to
#1{#2}$\!\!$}
\newcommand{\keq}{\!=\!}
\newcommand{\kleq}{\!\leq\!}
\newcommand{\kle}{\!<\!}
\newcommand{\kgr}{\!>\!}
\newcommand{\kin}{\!\in\!}
\newcommand{\ksubset}{\!\subset\!}
\newcommand{\ksupset}{\!\supset\!}
\newcommand{\xt}{\tilde x}
\newcommand{\dz}{\ensuremath{\mathrm{Dz}}} 
\newcommand{\sz}{\ensuremath{\mathrm{Sz}}} 
\newcommand{\ie}{\textit{i.e.,}\ }
\newcommand{\sder}[1]{\ensuremath{^{(#1)}_S}} 
\newcommand{\si}{\ensuremath{\mathrm{I}_S}} 
\title{The Szlenk index of $L_p(X)$}
\begin{document}

\allowdisplaybreaks

\begin{abstract}
We find an optimal upper bound on the values of the weak$^*$-dentability
index $\dz(X)$ in terms of the Szlenk index $\sz(X)$ of  a  Banach space $X$ with separable dual.
Namely, if $\;\sz(X)=\omega^{\alpha}$, for some
$\alpha<\omega_1,$ and $p\in(1,\infty)$, then
\[\sz(X)\le
 \dz(X)\le
\sz(L_p(X))\le \begin{cases}  \omega^{\alpha+1} &\text{ if $\alpha$ is a finite ordinal,}\\
                                                            \omega^{\alpha}       &\text{ if $\alpha$ is an infinite ordinal.}\\
                                                            \end{cases}
\]
\end{abstract}

\author{Petr H\'ajek}
\address{Mathematical Institute\\Czech Academy of Science\\\v Zitn\'a 25\\115 67 Praha 1\\
Czech Republic\\
and Department of Mathematics\\Faculty of Electrical Engineering\\
Czech Technical University in Prague\\ Zikova 4, 160 00, Prague}
\email{hajek@math.cas.cz}

\author{Thomas Schlumprecht}
\address{Department of Mathematics\\Texas A\&M University\\College Station, TX 77843\\
and Department of Mathematics\\Faculty of Electrical Engineering\\
Czech Technical University in Prague\\ Zikova 4, 160 00, Prague}
\email{schlump@math.tamu.edu}

\subjclass[2000]{46B03, 46B10.}

\thanks{The first author's research was  supported in part by
GA\v CR P201/11/0345,
Project Barrande 7AMB12FR003,  and RVO: 67985840.
The second author's research is partially supported by NSF
grants DMS0856148 and DMS1160633.}
\maketitle

\section{Introduction}

Let $X$ be a Banach space.
We say that  the dual $X^*$ is {\em weak$^*$-dentable}
 if for every nonempty bounded
subset $M\subset X^*$ and for every $\vp>0$ there are
$u\in X$ and $a\in\R$ such that the slice
$\{x^*\in M:\ \langle x^*,u\rangle>a\}$
is nonempty and has diameter less than $\vp$.
We say that  $X^*$ is {\em weak$^*$-fragmentable}
if for every nonempty bounded
subset $M\subset X^*$ and for every $\vp>0$ there is a
weak$^*$-open set $V\subset X^*$
such that the intersection $M\cap V$ is nonempty and has diameter
less than $\vp$. In \cite{As} Asplund considered  the property of $X$ that every continuous convex function defined
on an open set of $X$ is Fr\'echet differentiable on a dense $G_\delta$ set, and we call such a space an  {\em Asplund space}.
The following  equivalences between the notions are  stated in \cite{FHHMZ} and gather the results from \cite{As,NP,Phe}.

\begin{thm}\label{T:1.1} \cite[Theorem 11.8, p. 486 ]{FHHMZ}

Let $(X,\|\cdot\|)$ be a Banach space.
Then the following assertions are equivalent:

 {\em (i)} $X^*$ is weak$^*$-dentable.

 {\em (ii)} $X^*$ is weak$^*$-fragmentable.

 {\em (iii)} $X$ is an Asplund space.

 {\em (iv)} Every separable subspace of $X$ has a separable dual.
\end{thm}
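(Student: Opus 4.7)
The plan is to establish the four equivalences by the cyclic chain
\[\text{(i)}\Rightarrow\text{(ii)}\Rightarrow\text{(iii)}\Rightarrow\text{(iv)}\Rightarrow\text{(i)},\]
drawing on the three classical papers by Asplund, Namioka--Phelps, and Phelps cited in the statement. The implication (i)~$\Rightarrow$~(ii) is immediate: the slice $\{x^*\in M:\langle x^*,u\rangle>a\}$ equals $M\cap V$, where $V=\{x^*\in X^*:\langle x^*,u\rangle>a\}$ is a weak$^*$-open half-space, so any witness of weak$^*$-dentability is automatically a witness of weak$^*$-fragmentability.

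For (ii)~$\Rightarrow$~(iii) I would follow the Namioka--Phelps scheme. Given $f:U\to\R$ continuous and convex on an open $U\subset X$, the subdifferential $\partial f:U\rightrightarrows X^*$ is locally bounded and norm-to-weak$^*$ upper semicontinuous. For fixed $\vp>0$, $x\in U$, and small $r>0$, the set $\partial f(B(x,r))$ is bounded in $X^*$, so (ii) furnishes a weak$^*$-open $V\subset X^*$ such that $V\cap\partial f(B(x,r))$ is nonempty and of norm-diameter less than $\vp$. Upper semicontinuity pulls $V$ back to an open subset of $B(x,r)$ on which $\partial f$ has norm-oscillation less than $\vp$, and a Baire-category argument globalises this to a dense open set $U_\vp\subset U$. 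The intersection $\bigcap_n U_{1/n}$ is then a dense $G_\delta$ on which $\partial f$ is norm single-valued and norm-upper semicontinuous, which is the classical characterisation of the Fr\'echet-differentiability locus of $f$. A delicate point here is that converting the weak$^*$-diameter bound on $\partial f$ into a genuine norm-oscillation bound demands the Baire argument and not merely a pointwise fragmentation.

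Step (iii)~$\Rightarrow$~(iv) is classical. The Asplund property passes to closed subspaces: given $Y\subset X$ closed and $g:V\to\R$ continuous convex on an open $V\subset Y$, one extends $g$ locally to a continuous convex function on an open subset of $X$ by inf-convolution with a supporting cone, so Fr\'echet differentiability transfers from $X$ down to $Y$. Hence every separable closed subspace $Y\subset X$ is itself Asplund, and applying (iii) to $\|\cdot\|^2$ on such $Y$ produces Fr\'echet smoothness points on a dense $G_\delta$. The unique support functionals at a countable dense collection of smoothness points then form a countable subset of $S_{Y^*}$ that is dense in the norm-attaining part of $S_{Y^*}$, and this is norm-dense in $S_{Y^*}$ by the Bishop--Phelps theorem. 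Consequently $Y^*$ is separable.

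Finally, (iv)~$\Rightarrow$~(i) is the main obstacle, and I would argue by contraposition. Assume $X^*$ is not weak$^*$-dentable and fix a witness $M\subset X^*$ bounded together with $\vp>0$. Build a dyadic tree indexed by $\{0,1\}^{<\omega}$ of $x_s^*\in M$ and $u_s\in X$ as follows: at each node $s$, use non-dentability on a nested weak$^*$-open neighbourhood of $x_s^*$ to find two points of $M$ at norm-distance at least $\vp/2$, pick by Hahn--Banach a $u_s\in X$ with $|\langle x_{s0}^*-x_{s1}^*,u_s\rangle|>\vp/3$ separating them, and shrink the next-level neighbourhoods so that all descendants of each child remain on its own side of $u_s$. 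Set $Y:=\ospa\{u_s:s\in\{0,1\}^{<\omega}\}$. For each branch $\sigma\in\{0,1\}^\omega$ the bounded sequence $(x_{\sigma|n}^*)_n$ has a weak$^*$-cluster point $x_\sigma^*\in X^*$, and the construction forces $|\langle x_\sigma^*-x_\tau^*,u_s\rangle|\geq\vp/3$ whenever branches $\sigma\ne\tau$ split at node $s$. Hence $\{x_\sigma^*|_Y:\sigma\in\{0,1\}^\omega\}$ is an uncountable $(\vp/3)$-separated subset of $Y^*$, contradicting the separability of $Y^*$ guaranteed by (iv). The main technical difficulty is the book-keeping that simultaneously keeps the separating vectors $u_s$ inside $X$ (so that $Y$ is genuinely a subspace of $X$ rather than of $X^{**}$) and ensures that the $\vp/3$-gap at the splitting node survives passage to the weak$^*$-cluster points along the two branches.
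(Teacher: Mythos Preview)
The paper does not prove Theorem~1.1; it is quoted from \cite[Theorem~11.8]{FHHMZ} and attributed to \cite{As,NP,Phe}, so there is no argument in the paper to compare your proposal against. Your cycle is the standard one, and the sketches of (i)$\Rightarrow$(ii) and (ii)$\Rightarrow$(iii) are fine.

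Two of your steps, however, gloss over real difficulties. In (iii)$\Rightarrow$(iv), extending a convex function $g$ on $Y$ to $G$ on an open subset of $X$ and invoking the Asplund property of $X$ yields a dense $G_\delta$ of Fr\'echet points for $G$ \emph{in $X$}; but since $Y$ has empty interior in $X$ this set may miss $Y$ entirely, so Fr\'echet differentiability does not transfer down to $g$ as you suggest. The hereditary nature of the Asplund property requires a different argument (the usual one passes through the Radon--Nikod\'ym property of $X^*$, or through weak$^*$-fragmentability and the restriction map $X^*\to Y^*$). In (iv)$\Rightarrow$(i), non-dentability of $M$ controls only \emph{slices} of $M$, yet after one splitting your recursion lives in sets of the form $M\cap H_1\cap\cdots\cap H_n$ with several open half-spaces $H_i$, and these are no longer slices of $M$, so non-dentability cannot be invoked again. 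The standard repair is to replace $M$ by its weak$^*$-closed convex hull $K$ (which still has no small slices) and use Choquet's lemma that slices form a relative weak$^*$-neighbourhood base at each extreme point of a weak$^*$-compact convex set; then every nonempty relatively weak$^*$-open subset of $K$ contains a slice, hence has diameter $\ge\vp$, and the dyadic tree goes through in nested weak$^*$-open subsets of $K$. As written, your tree argument is really a proof of $\neg$(ii)$\Rightarrow\neg$(iv), and closing the cycle would still require the nontrivial implication (ii)$\Rightarrow$(i).
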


This fundamental result has many ramifications,
including  for the investigation  of the  {\em Radon-Nikod\'ym Property}  and the
renorming theory of Banach spaces, see e.g. \cite{Bou,FHHMZ,HMVZ}.

Our object of study in this note is the quantitative relationship
between   weak$^*$-dentability and
weak$^*$-fragmentability. Our results are expressed in terms
of the values of derivation indices, which are naturally associated with the
fragmentation properties.

We begin by  defining  the Szlenk derivation and the Szlenk index that
have been first introduced in \cite{Sz}.

Consider
a real Banach space $X$ and a weak$^*$-compact subset $K$  of $X^*$. For
$\eps>0$ we let $\cV_{(K,\eps)}$ be the set of all relatively weak$^*$-open subsets $V$
of $K$ such that the norm diameter of $V$ is less than $\eps$ and put
$s_{\eps}(K)=K\setminus \cup\{V:V\in\cV_{(K,\eps)}\}.$ Then we define inductively
$s_{\eps}^{\alpha}(K)$ for any ordinal $\alpha$ by
$s^{\alpha+1}_{\eps}(K)=s_{\eps}(s_{\eps}^{\alpha}K)$ and
$s^{\alpha}_{\eps}(K)={\displaystyle \cap_{\beta<\alpha}}s_{\eps}^{\beta}K$, if
$\alpha$ is a limit ordinal. We then define $\sz (X,\eps)$ to be the least ordinal $\alpha$ so
that $s_{\eps}^{\alpha}(B_{X^*})=\emptyset,$ if such an ordinal exists. Otherwise
we write $\sz (X,\eps)=\infty.$ The {\it Szlenk index} of $X$ is finally
defined to be  $\sz (X)=\sup_{\eps>0}\sz (X,\eps).$

If $K$ is weak$^*$-compact and convex, we call a
weak$^*$-slice of $K$ any non empty set of the form $S=\{ x^*\in K,\
x^*(x)>t\},$ where $x\in X$ and $t\in \R$. Then we denote for $\eps>0$ by $\cS_{(K,\eps)} $ the set of all weak$^*$-slices of $K$ of norm diameter less than
$\eps$ and put  $d_{\eps}(K)=K\setminus \cup\{S:S\in\cS_{(K,\eps)} \}$. From this
derivation, we arrive similarly to the {\it weak$^*$-dentability
indices} of $X$ that we denote $\dz(X,\eps)$, for $\eps>0$, and $\dz(X)=\sup_{\vp>0}\dz(X,\eps)$. Since $\cS_{(K,\eps)}\subset \cV_{(K,\eps)}$, for  all $\eps>0$, it follows immediately
that $\dz(X,\eps)\ge \sz(X,\eps)$, and  $\dz(X)\ge \sz(X)$. Our  problem consists of finding an estimate going in the opposite direction.

In the language of indices Theorem \ref{T:1.1} implies
that $\sz (X)\ne\infty$ holds if and only if
$\text{Dz}(X)\ne\infty$. Indeed, the respective index is equal to
$\infty$ if and only if the dual $X^*$ contains
a $w^*$-compact and non empty subset without any $w^*$-open and
nonempty subsets (resp. slices) of diameter less than
some $\eps>0$.

It is now clear that a natural quantitative approach to
Theorem \ref{T:1.1} consists of comparing the values of $\sz (X)$ and $\text{Dz}(X)$.
This problem has received a fair amount of attention in the literature.
The first estimates in this direction were purely existential.
 We recall \cite[Lemma 1.6]{Sz}  that if $X^*$ is separable then $\sz (X)<\omega_1$.
In
\cite[Proposition 2.1]{La2} it is shown, using an approach from descriptive set theory due to B.
Bossard (see \cite{Bo1} and \cite{Bo2}), that there is a universal function
$\psi:\omega_1 \to \omega_1$, such that if $X$ is an Asplund space with
$\sz (X)<\omega_1$, then $\text{Dz}(X)\leq \psi(\sz (X))$.
Using geometrical arguments, Raja \cite[Theorem 1.3]{raja} has proved that
one can use $\psi(\alpha)=\omega^\alpha$ as a growth control function
for every ordinal $\alpha$ (\ie without the restriction $\alpha<\omega_1$).
The best value for $\psi(\omega)$,  namely $\psi(\omega)=\omega^2$ was obtained in \cite[Theorem 4.1]{HL} .

Our main result, Theorem \ref{T:1.2},
 gives the optimal form of $\psi$, for all $\alpha<\omega_1$.
In particular it solves the problem for all separable spaces with separable dual.

\begin{thm} \label{T:1.2} Let $X$ be an  Asplund space and $1<p<\infty$.
 If $\;\sz(X)=\omega^{\alpha}$, for some
$\alpha<\omega_1,$ then
\begin{equation}\label{E:1.2.1}
\sz(X)\le \dz(X)\le
\sz(L_p(X))\le \begin{cases}  \omega^{\alpha+1} &\text{ if $\alpha$ is a finite ordinal,}\\
                                                            \omega^{\alpha}       &\text{ if $\alpha$ is an infinite ordinal.}\\
\end{cases}
\end{equation} \end{thm}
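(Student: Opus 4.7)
The theorem contains three inequalities. The leftmost, $\sz(X)\le\dz(X)$, is immediate from $\cS_{(K,\eps)}\subset\cV_{(K,\eps)}$, which yields $s^\beta_\eps(K)\subset d^\beta_\eps(K)$ for all ordinals $\beta$. Ordinal arithmetic gives $1+\alpha=\alpha+1$ for finite $\alpha$ and $1+\alpha=\alpha$ for $\alpha\ge\omega$, so the case distinction of the theorem compresses into the single bound $\sz(L_p(X))\le\omega^{1+\alpha}$. The remaining work is thus to prove $\dz(X)\le\sz(L_p(X))$ and the main bound $\sz(L_p(X))\le\omega^{1+\alpha}$.

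For $\dz(X)\le\sz(L_p(X))$, the plan is a transfer argument using the tensor decomposition $L_p(X)^*=L_q(X^*)$. Fix $\phi\in L_p$ and $\psi\in L_q$ with $\|\phi\|_p=\|\psi\|_q=\langle\phi,\psi\rangle=1$; then $j\colon X^*\hookrightarrow L_q(X^*)$, $j(x^*)=\psi\otimes x^*$, is an isometric, weak$^*$-to-weak$^*$-continuous embedding onto a weak$^*$-closed affine copy of $B_{X^*}$ inside $B_{L_q(X^*)}$. The key step is to show that every weak$^*$-open neighborhood of $j(x^*)$ in $B_{L_q(X^*)}$ of diameter less than $\eps$ can be refined, using the uniform $L_p$-smoothness in the scalar direction, into a weak$^*$-slice of $B_{X^*}$ containing $x^*$ and of diameter less than a constant multiple of $\eps$. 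Running this refinement through transfinite derivations yields $\dz(X,\eps)\le\sz(L_p(X),c\eps)$ for some $c=c(p)>0$, hence the index bound $\dz(X)\le\sz(L_p(X))$.

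The heart of the argument is $\sz(L_p(X))\le\omega^{1+\alpha}$, proved by transfinite induction on $\alpha$. The base case $\alpha=0$, where $X$ is finite-dimensional, reduces to $\sz(L_p)=\omega$, classical for the reflexive space $L_p$. For the inductive step, analyze a weak$^*$-open subset $V\subset B_{L_q(X^*)}$ of diameter less than $\eps$ via a two-scale resolution: conditioning on a finite measurable partition $\pi$, the restriction of $F\in V$ to each atom of $\pi$ is controlled by a copy of $B_{X^*}$, which by the inductive hypothesis needs at most $\omega^\alpha$ Szlenk derivations to clear; the choice of a sufficiently refined dyadic $\pi$ handles the $L_p$-tail and costs an additional factor $\omega$. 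Composing these two layers of derivation yields the bound $\omega\cdot\omega^\alpha=\omega^{1+\alpha}$.

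The main technical obstacle lies at the limit stages of the inductive argument, in particular when $\alpha$ is a countable limit ordinal. One must show that the intersection, over an approximating sequence $\alpha_n\nearrow\alpha$, of derivations of length $\omega^{1+\alpha_n}$ terminates by step $\omega^{1+\alpha}$; this requires careful synchronization between the ``partition-refinement'' $\omega$-factor and the ``coordinatewise'' $\omega^\alpha$-factor so that the excess does not accumulate. The ordinal absorption $\omega+\alpha=\alpha$ for infinite $\alpha$ is decisive here, yielding $\omega^\alpha$ rather than the weaker $\omega^{\alpha+1}$ in the infinite case and producing the case distinction of the statement. A secondary difficulty is making the per-step $\eps$-scale degradation in the two-scale resolution $p$-dependent but uniform across all derivation levels, so that it affects only the $\eps$-parameter and not the ordinal bound.
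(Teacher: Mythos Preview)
Your argument for the main bound $\sz(L_p(X))\le\omega^{1+\alpha}$ has a genuine gap, and the approach is entirely different from the paper's. The ``transfinite induction on $\alpha$'' is ill-posed: at stage $\alpha$ you invoke an ``inductive hypothesis'' to clear a copy of $B_{X^*}$ in $\omega^\alpha$ derivations, but that is simply the standing assumption $\sz(X)=\omega^\alpha$, not an inductive hypothesis about smaller ordinals; nothing has been reduced. Read instead as a direct product estimate $\omega\cdot\omega^\alpha$, the ``two-scale resolution'' still does not work: conditioning on a finite partition $\pi$ does not split the Szlenk derivation on $B_{L_q(X^*)}$ into independent coordinatewise derivations on copies of $B_{X^*}$, since the restriction of $F$ to an atom is still an $X^*$-valued \emph{function}, not a point of $X^*$, and the weak$^*$ topology on $L_q(X^*)$ does not localize along $\pi$. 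No mechanism is supplied for the claimed factorization, and you yourself concede that the limit-ordinal synchronization is unresolved.

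The paper proceeds along a completely different line. After reducing to separable $X$ and passing to the tree characterization $\sz(\cdot)=\sup_\rho I_w(\cF_\rho)$ from \cite{AJO}, the new ingredient is a tree-transfer lemma (Lemma~\ref{L:2.4}, Corollary~\ref{C:2.5}): if $X$ embeds into $Z$ within distance $\vep$ of a subspace $Y$, then $I_w(\cF_\rho^X)\le I_w(\cF_{\rho-6\vep}^Y)$. Zippin's embedding (Theorem~\ref{T:2.6}) supplies exactly such a $Y\cong C([0,\beta])$ with $\beta<\omega^{\omega^\alpha}$; passing to the $L_p$-level gives $I_w(\cF_\rho^{L_p(X)})\le I_w(\cF_{\rho'}^{L_p(Y)})<\sz(L_p(Y))$, and the right-hand side was computed exactly in \cite{HLP}. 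The bound thus comes from comparison with a $C(K)$ model whose $L_p$-Szlenk index is already known, not from any intrinsic decomposition of $L_p(X)$; the finite/infinite case distinction is inherited directly from the computation in \cite{HLP}, not from ordinal absorption in an inductive scheme.
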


It should be noted \cite[Proposition 5.4]{La2} that if $\sz(X)<\omega_1$,  then the Szlenk index of $X$
must be of the form $\sz (X)=
\omega^{\alpha}$, for some ordinal $\alpha$.
This was noted independently and also for several other indices  in \cite[Corollary 3.10]{AJO}.
The same condition holds for the dentability index, \ie if $\dz(X)<\omega_1$ then
$\text{Dz}(X)=
\omega^{\alpha}$, for some ordinal $\alpha$.
So there are no possible intermediate values of indices between $\omega^{\alpha}$
and $\omega^{\alpha+1}$. Our result shows that the dentability
index is either equal to the Szlenk index, or
if $\alpha$ is finite it may happen that it exceeds Szlenk by just one step.
At the end of our note we indicate
examples showing that both possibilities may occur in the case that $\alpha$ is finite.

It should be also noted that both indices $\sz (X), \text{Dz}(X)$
have found many applications in the  geometry and the structure  of Banach spaces,
renorming theory and nonlinear theory. This regards also the quantitative
estimates of their values, and their relationships.
For more details we refer to the survey
paper of Lancien \cite{La3}.

\section{Proof of the main result}\label{S:2}

The proof of the main theorem, which is given at the end of this section,
requires several ingredients. We are going to review
these ingredients first, together with some necessary technical modifications
needed for our proof.
The main new idea, contained in Lemma \ref{L:2.4} and its Corollary \ref{C:2.5}, consists of
a nonlinear technique for transferring certain trees between pairs of
Banach spaces.

Let us denote by $L_p(X)$ the space of all $X$-valued Bochner integrable
functions on $[0,1]$, equipped with the $L_p$-norm.
By a result of Lancien \cite[Lemma 1]{La3} if $p\in(1,\infty)$
then
\begin{equation}\label{E:2.1}
\text{Dz}(X)\le\sz(L_p(X))
\end{equation}
 for any space $X$ having a  separable dual. The proof in \cite{La3}
is done for $p=2$, but it can be easily adjusted to any $p\in(1,\infty)$.

We now  recall  some standard facts about ordinals and the spaces of continuous function on them.
  We denote by $\omega$  the first infinite
ordinal and by  $\omega_1$ the first uncountable ordinal.
 We always consider  sets of
ordinals as topological spaces equipped with the order topology.

The isomorphic classification of the spaces $C([0,\alpha])$, for $\alpha<\omega_1,$
is due to C. Bessaga and  A. Pe\l czy\'nski \cite[Theorem 1]{BP}.
They have shown that $C([0,\omega^{\omega^\alpha}])$, for $\alpha<\omega_1$,
are pairwise non-isomorphic spaces, and for every
$\omega^{\omega^\alpha}\le\beta<\omega^{\omega^{\alpha+1}}$ there
is an isomorphism
 between $C( [0,\beta]$ and $C\big([0,\omega^{\omega^{\alpha}}]\big)$.
Moreover,
every $C(K)$ space for a countable compact $K$ is isomorphic to one of these
spaces. Samuel \cite[Th\'eor\`em, p.91]{Sa} computed the precise values of the Szlenk index and showed that
\begin{equation}\label{E:2.3}
\sz \big(C([0,\omega^{\omega^\alpha}])\big)=\omega^{\alpha+1},\text{ for all $\alpha<\omega_1$},
\end{equation}
which implies that the Szlenk index
determines the isomorphic classes of the separable
$C([0,\omega^{\omega^\alpha}])$ spaces. Other proofs of this result were given
in \cite{AJO,HL}.

One of the main ingredients of our proof is an alternative description
of the Szlenk  index introduced in \cite{AJO}, which is based
on a derivation and its corresponding index defined
for certain trees in the  space $X$.
This approach has been further developed e.g. in \cite{Ca,FOSZ,OSZ}, and we now  recall some notion introduced there.

Let $X$ be a Banach space. We let  $S_X^{<\omega}\keq
\bigcup_{n=0}^\infty S_X^n$, the set of all finite sequences in $X$,
which includes the sequence of length zero denoted by $\emptyset$. For
$x\kin X$ we shall write $x$ instead of $(x)$, \ie we identify $X$
with sequences of length~$1$ in $X$. A \emph{tree on $S_X$ }is a
non-empty
subset $\cA$ of $S_X^{<\omega}$ closed under taking initial segments: if
$(x_1,\dots,x_n)\kin \cA$ and $0\kleq m\kleq n$, then
$(x_1,\dots,x_m)\kin\cA$. There is a natural partial order $\preceq$
on the elements of the tree $\cA$, which gives $a\preceq b$ if and only if
$a$ is an initial segment of $b$.

Given $\bx\keq (x_1,\dots,x_m)$ and $\by\keq(y_1,\dots,y_n)$ in
$X^{<\omega}$, we write $(\bx,\by)$ for the concatenation of $\bx$
and $\by$:
$$
(\bx,\by)=(x_1,\dots,x_m,y_1,\dots,y_n) .
$$
Given $\cA\subset S_X^{<\omega} $ and $\bx\in S^{<\omega}_X$, we let
$$\cA(\bx)=\big\{\by\in S_X^{<\omega}: (\bx,\by)\in \cA\big\}.$$

Let $S$ be a set consisting of  sequences in $S_X$. In our case $S$ will be the set of
normalized weakly null sequences in $X$. For a tree $\cA$ on $X$ \emph{the $S$-derivative
  $\cA_S'$ of $\cA$ }consists of all finite sequences of two kinds:

1. first kind: $\bx\kin
X^{<\omega}$, for which there is a sequence $(y_i)_{i=1}^\infty\kin S$
with $(\bx,y_i)\kin\cA$ for all $i\kin\N$,

2. second kind: initial segements $(x_1,\dots, x_m)$,  $m\le n$,  where $(x_1,\dots,x_n)$
is a sequence of the first kind.

 Note that
$\cA'_S\ksubset\cA$ and that $\cA'_S$ is also a tree unless it is empty

We define
higher order derivatives $\cA\sder{\alpha}$ for ordinals
$\alpha\kle\omega_1$ by recursion as follows.
\begin{align*}
  \cA\sder{0} = \cA,\,
  \cA\sder{\alpha+1} \!= \big( \cA\sder{\alpha}\big)'_S,\text{ for }\alpha\kle\omega_1,\text{ and }
  \cA\sder{\lambda} = \bigcap _{\alpha <\lambda}
  \cA\sder{\alpha} \text{ for limit ordinals  }\lambda\kle\omega_1 .
\end{align*}
It is clear that $\cA\sder{\alpha}\ksupset \cA\sder{\beta}$, whenever
$\alpha\kleq\beta$, and that $\cA\sder{\alpha}$ is a tree
or empty,
 for all
$\alpha$.  An easy induction also shows that
$$\big(\cA(\bx)\big)^{(\alpha)}_S=\big(\cA\big)^{(\alpha)}_S(\bx) \text{  for all $\bx\in S_X^{<\omega}$ and all ordinals $\alpha$}.$$


Our proof will rely on the use of trees with the next additional heredity property.
We will say that $\cA$ is a \emph{hereditary tree} (H-tree, for short) if for  every sequence $\bx\in\cA$, every subsequence 
 of $\bx$ is also in $\cA$. Note  that in this case
  all elements of the second kind are also of the first kind and  that $\cA'$ consists therefore of all sequences in $\cA$ which are of the first kind. Taking the $S$-derivative of an H-tree therefore amounts to removing all elements which are not of the first kind.
It is clear that the property of being
an H-tree is preserved under taking $S$-derivatives of any ordinal order.

We now define \emph{the $S$-index $\si(\cA)$ of $\cA$ }by
$$
\si(\cA) = \min \{\alpha\kle\omega_1:\,\cA\sder{\alpha}\keq \emptyset\}
$$
if there exists $\alpha\kle\omega_1$ with $\cA\sder{\alpha}\keq
\emptyset$, and $\si(\cA)\keq\infty$ otherwise.

 Note that if $I_S(\cA)\not=\infty$, it will always be a successor ordinal. Indeed,
if $\lambda$ is a limit ordinal and $I_S (\cA^{\alpha})>0$ for all  $\alpha<\lambda$, then, since
 $\emptyset \in \bigcap_{\alpha<\lambda}  \cA^{(\alpha)}=\cA^{(\lambda)}$ we get  $I_S(\cA)>{\lambda}$.

If $\cA$ is a tree on $S_X$ we call a subset $\cB\subset \cA$ {\em a subtree } if it is also a tree on $S_X$.
Let $Y$ be another Banach space and let  $\cA\subset S^{<\omega}_X$ and $\cB\subset  S^{<\omega}_Y$ be trees on $S_X$ and $S_Y$, respectively.
We say that $\cA$ {\em order isomorphically embeds into $\cB$} if there is a
injective map
$\Psi:\cA\to \cB$, with the property that  $\Psi(\bx)\prec \Psi(\bz)$ if and only if $\bx\prec\bz$. In that case $\Psi$ is called an {\em order isomorphism from
$\cA$ to $\cB$}.

In this paper we will only  consider the case that $S$ consists of the  normalized weakly null sequences and will therefore write
$\cA'$ and $\cA^{(\alpha)}$, for a tree
$\cA\subset S_X^{<\omega}$, instead of $\cA_S'$ and $\cA^{(\alpha)}_S$, respectively, and we  put $I_w(\cA)=I_S(\cA)$, which we call the
{\em weak index of $\cA$}.

The following Proposition describes a sufficient  condition for $I_w(\cA)\le I_w(\cB)$, if  $\cA$, $\cB$  are two trees on the sphere of two
Banach spaces $X$ and $Y$.

\begin{prop}\label{P:2.1}
Assume that $X$ and $Y$ are two Banach spaces, and   $\cA\subset S^{<\omega}_X$ and  $\cB\subset S^{<\omega}_Y$ are  trees on $S_X$ and $S_Y$, respectively, and assume that  there is an order isomorphism $\Psi$ from $\cA$ to $\cB$, with the  following  property:
\begin{align}\label{E:2.1.1} &\text{If $\bx\kin \cA$  and if $(x_k)\ksubset S_X$ is weakly null, with $(\bx, x_k)\kin\cA$, for $k\kin\N$, then there} \\
&\text{is a weakly null sequence $(y_k)\ksubset S_Y$,
so that $\Psi(\bx,x_k)=(\Psi(\bx),  y_k)$.}\notag
\end{align}

Then   $I_w(\cA)\le I_w(\cB)$.
\end{prop}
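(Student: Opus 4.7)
The plan is to prove by transfinite induction on $\alpha$ that $\Psi(\cA^{(\alpha)})\ksubset\cB^{(\alpha)}$ for every ordinal $\alpha$. Once this is established the conclusion is immediate: taking $\alpha\keq I_w(\cB)$, we have $\cB^{(\alpha)}\keq\emptyset$, so $\Psi(\cA^{(\alpha)})\keq\emptyset$, and since $\Psi$ is injective (being an order isomorphism) we obtain $\cA^{(\alpha)}\keq\emptyset$, and therefore $I_w(\cA)\kleq\alpha\keq I_w(\cB)$.

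The base case $\alpha\keq 0$ is the standing hypothesis $\Psi(\cA)\ksubset\cB$, and the limit case follows from $\cA^{(\lambda)}\keq\bigcap_{\alpha<\lambda}\cA^{(\alpha)}$ and the analogous equality for $\cB$. The successor case is where property \eqref{E:2.1.1} is used. Assume $\Psi(\cA^{(\alpha)})\ksubset\cB^{(\alpha)}$ and let $\bx\kin\cA^{(\alpha+1)}$. If $\bx$ is of the first kind, choose a weakly null $(x_k)\ksubset S_X$ with $(\bx,x_k)\kin\cA^{(\alpha)}$ for every $k$. Since $\cA^{(\alpha)}\ksubset\cA$, property \eqref{E:2.1.1} applies and produces a weakly null $(y_k)\ksubset S_Y$ with $\Psi(\bx,x_k)\keq(\Psi(\bx),y_k)$. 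By the inductive hypothesis these images lie in $\cB^{(\alpha)}$, witnessing that $\Psi(\bx)$ is of the first kind in $\cB^{(\alpha)}$, so $\Psi(\bx)\kin\cB^{(\alpha+1)}$. If instead $\bx$ is of the second kind, it is an initial segment of some first-kind $\bz\kin\cA^{(\alpha)}$; the argument just given shows that $\Psi(\bz)$ is of the first kind in $\cB^{(\alpha)}$, while the order isomorphism property yields $\Psi(\bx)\prec\Psi(\bz)$. Consequently $\Psi(\bx)$ is an initial segment of a first-kind element of $\cB^{(\alpha)}$, hence of the second kind there, and so $\Psi(\bx)\kin\cB^{(\alpha+1)}$.

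The main point requiring attention — and the reason property \eqref{E:2.1.1} is stated as it is — is that the weakly null extensions witnessing that $\bx$ is of the first kind in a derived tree $\cA^{(\alpha)}$ must be transported to weakly null extensions with the same stem $\Psi(\bx)$ in the corresponding derived tree $\cB^{(\alpha)}$. Property \eqref{E:2.1.1} is precisely the mechanism that does this, and the inclusion $\cA^{(\alpha)}\ksubset\cA$ makes it available at every derivation level. Beyond being careful with the two kinds of elements in the derivative, the induction is routine.
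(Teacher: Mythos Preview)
Your proof is correct and follows the same transfinite-induction strategy as the paper's proof, establishing $\Psi(\cA^{(\alpha)})\subset\cB^{(\alpha)}$ for all $\alpha$. In fact you are slightly more careful than the paper: the paper's successor step treats every $\bx\in\cA^{(\alpha+1)}$ as if it were of the first kind, whereas you explicitly handle the second-kind case by passing to a first-kind extension $\bz$ and using that $\Psi$ preserves initial segments. One small remark: your final deduction implicitly assumes $I_w(\cB)<\infty$, but the case $I_w(\cB)=\infty$ is trivial, so no harm done.
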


\begin{proof} We verify by transfinite induction that  for all ordinals $\alpha$
\begin{align}\label{E:2.1.2}&\Psi\big(\cA^{(\alpha)}\big)\subset \cB^{(\alpha)}.\qquad\qquad\qquad\qquad
\end{align}
If  $\alpha=0$ this is  just our assumption.If \eqref{E:2.1.2}  holds for  some ordinal $\alpha$ and if $\bx\in \cA^{(\alpha+1)}$,
then there is a weakly null sequence $(x_k)\subset S_X$ so that $(\bx,x_k)\in\cA^{(\alpha)}$, for all $k\kin\N$,  and, by
 assumption \eqref{E:2.1.1}, we can
choose a weakly null sequence $(y_k)\subset S_Y$
so that  $\Psi(\bx,x_k)= \big(\Psi(\bx),y_k)$. By the  induction hypothesis
$(\Psi(\bx),y_k)=\Psi(\bx,x_k)\in \cB^{(\alpha)}$ for all $k\in\N$. Now, since $(y_k)$ is weakly null, this implies  that $\Psi(\bx)\in \cB^{(\alpha+1)}$.

If $\lambda$ is a limit ordinal and \eqref{E:2.1.2}  holds for all   $\alpha<\lambda$, then
$$\Psi\big(\cA^{(\lambda)}\big)= \bigcap_{\alpha<\lambda}  \Psi\big(\cA^{(\alpha)}\big)\subset
\bigcap_{\alpha<\lambda}  \cB^{(\alpha)}=\cB^{(\lambda)}.$$
\end{proof}

The following characterization of  the Szlenk index was proven in \cite{AJO}.
\begin{thm}\label{T:2.2}\cite[Theorem 4.2]{AJO} If $X$ is a separable Banach space not containing $\ell_1$ then
 $$\sz(X)=\sup_{\rho>0} I_w(\cF_\rho),$$
 where for $\rho>0$, we let
 $$\cF_\rho=\cF^X_\rho=\Big\{(x_1,x_2,\ldots x_n)\in S_X^{<\omega}: \Big\|\sum_{i=1}^n a_i x_i\Big\|\ge\rho \sum_{i=1}^n a_i, \text{ for all $(a_i)_{i=1}^n\subset[0,\infty)$}\Big\}.$$
\end{thm}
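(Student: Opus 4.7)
The plan is to prove the two inequalities $\sup_{\rho>0} I_w(\cF_\rho) \le \sz(X)$ and $\sz(X) \le \sup_{\rho>0} I_w(\cF_\rho)$ separately, each by a transfinite induction that translates between the Szlenk derivation on $B_{X^*}$ and the weak derivative on trees in $S_X$.

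For the first inequality, I would assign, by induction on $\alpha$, to each $\bx = (x_1,\ldots,x_n) \in \cF_\rho^{(\alpha)}$ a functional $x^*_{\bx} \in s_{\rho'}^{\alpha}(B_{X^*})$, for any prescribed $\rho' < \rho$, carrying the auxiliary property $x^*_{\bx}(x_i) \ge \rho$ for all $i$. At $\alpha = 0$, Hahn--Banach separation suffices: the inequality defining $\cF_\rho$ places $\operatorname{conv}\{x_1,\ldots,x_n\}$ at norm distance at least $\rho$ from the origin, producing such an $x^*_{\bx}$. At successor steps, if $(\bx, y_k) \in \cF_\rho^{(\alpha)}$ with $(y_k)$ weakly null, let $x^*_k = x^*_{(\bx,y_k)}$ be the functionals given by the inductive hypothesis and extract a weak$^*$-cluster point $x^*$. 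Then $x^*(x_i) \ge \rho$ for $i \le n$ by weak$^*$-continuity, while $x^*(y_k) \to 0$ combined with $x^*_k(y_k) \ge \rho$ forces $\|x^*_k - x^*\| \ge \rho'$ for large $k$, so $x^* \in s_{\rho'}^{\alpha+1}(B_{X^*})$. Limit ordinals pose no issue, as both derivations are defined by intersection.

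For the reverse inequality, I would prove $\sz(X, \vp) \le I_w(\cF_{c\vp})$ for a universal constant $c > 0$, building inductively from each $x^* \in s_\vp^{\alpha}(B_{X^*})$ a weakly null tree of height $\alpha$ in $\cF_{c\vp}$. At a successor stage, choose $x^*_k \in s_\vp^{\alpha}$ with $x^*_k \xrightarrow{w^*} x^*$ and $\|x^*_k - x^*\| \ge \vp$, then $z_k \in S_X$ with $(x^*_k - x^*)(z_k) > \vp/2$. The only difficulty is that $(z_k)$ is merely bounded, not weakly null. Here $X \not\supset \ell_1$ is essential: Rosenthal's $\ell_1$-theorem provides a weakly Cauchy subsequence of $(z_k)$, and renormalized consecutive differences produce a weakly null sequence $(y_k) \subset S_X$ that still witnesses (after a further subsequence, and using $x^*_k \xrightarrow{w^*} x^*$ to cancel cross terms) the $\ell_1^+$-lower bound placing $(\bx, y_k)$ into $\cF_{c\vp}^{(\alpha)}$. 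Iterating down the Szlenk hierarchy from level $\alpha$ delivers a tree of weak index at least $\alpha$.

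The main obstacle is this second direction. Specifically, converting the $X^*$-norm separation along a weak$^*$-convergent sequence into a genuinely weakly null sequence in $S_X$ that still supports the quantitative lower bound defining $\cF_\rho$ is where the hypothesis $X \not\supset \ell_1$ is decisive; Rosenthal's dichotomy is what bridges the two sides. Once this conversion is in place, both inductions proceed routinely, limits included, since the Szlenk derivation and the weak derivative of a tree are both defined by intersection at limit ordinals.
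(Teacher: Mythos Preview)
The paper does not prove this theorem; it is quoted from \cite[Theorem~4.2]{AJO}, and the paper's only related contribution is the subsequent proposition showing that the additional $\frac{1}{\rho}$-basicness condition in AJO's original definition of $\cF_\rho$ can be removed without changing the weak index. So there is no in-paper proof to compare your proposal against.

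Your outline is in the spirit of the AJO argument, and the first inequality is essentially correct as sketched. The second direction has a gap you have not closed. After producing a weakly null $(y_k)$ from the weakly Cauchy $(z_k)$ by taking normalised consecutive differences, you assert that $(\bx, y_k)\in\cF_{c\vp}$; this requires a single functional that is at least $c\vp$ on every earlier coordinate $x_i$ and on $y_k$ simultaneously. Two issues arise. First, forming $y_k$ as a difference of two $z$'s means the relevant lower bound involves two distinct functionals $x^*_{k'},x^*_{k}$ and cross terms that do not cancel as cleanly as ``using $x^*_k\xrightarrow{w^*}x^*$'' suggests. Second, and more importantly, nothing in the sketch records why $x^*_k$ should be large on the earlier $x_i$'s; that only follows if the induction hypothesis explicitly carries a functional $x^*$ along the branch with $x^*(x_i)>c\vp$ for all $i$, so that $x^*_k\xrightarrow{w^*}x^*$ transfers the estimate for large $k$. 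You must build this bookkeeping into the statement being inducted on and then check that the constant $c$ survives the transfinite recursion without degrading step by step; both points require real work beyond what the sketch provides, and are among the reasons AJO's original formulation of $\cF_\rho$ carried the basicness condition.
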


It is important to note, and it will be used repeatedly in what follows, that $\cF_\rho$ is in fact an H-tree, and , thus, that all its derivatives are H-trees.

\begin{rem} In \cite[Definition 3.6]{AJO} the set $\cF_\rho$ was actually defined differently, namely
$$\tilde\cF_\rho=\left\{(x_1,x_2,\ldots x_n)\in S_X^{<\omega}: \begin{matrix} \big\|\sum_{i=1}^n a_i x_i\big\|\ge\rho \sum_{i=1}^n a_i ,\text{ for all $(a_i)_{i=1}^n\subset[0,\infty)$}\\
\text{ and } (x_1,x_2,\ldots x_n) \text{ is $\frac1\rho$-basic}\end{matrix}
 \right\}.$$
This was necessary in \cite{AJO} because in that paper the $S$-derivatives for  several  other  sets $S$ of sequences were considered.\end{rem}

In the case that one only considers derivatives with respect to the weakly null sequences the restriction to $\frac1\rho$-basic sequences is superfluous, as the next
proposition shows.

\begin{prop}\label{P:2.3} Let $\cA\subset S_X^{<\omega}$ be an H-tree and $c>1$. Then
$$I_w(\cA)=I_w\big(\cA\cap \{(x_1,x_2,\ldots x_n)\in S^{<\omega} : (x_1,x_2,,\ldots x_n)\text{ is $c$-basic}\}\big).$$
\end{prop}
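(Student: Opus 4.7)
The easy half of the equality is the inequality $I_w(\cA_c) \le I_w(\cA)$, where I abbreviate $\cA_c := \cA \cap \{\bx \in S_X^{<\omega} : \bx \text{ is } c\text{-basic}\}$. Since subsequences of $c$-basic sequences are $c$-basic, $\cA_c$ is itself an H-tree, and a quick transfinite induction shows $\cA_c^{(\alpha)} \subset \cA^{(\alpha)}$ for every $\alpha$: any weakly null sequence witnessing membership of $\bx$ in $\cA_c^{(\alpha+1)}$ already puts $\bx$ into $\cA^{(\alpha+1)}$ via the inductive inclusion, and the limit step is a routine intersection.

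For the substantive direction $I_w(\cA) \le I_w(\cA_c)$, the plan is transfinite induction parametrized by the basis constant, fueled by a classical Bessaga--Pe\l czy\'nski-type selection principle. The selection I will invoke is: given a $d$-basic sequence $(y_1,\ldots,y_n) \subset S_X$, a weakly null sequence $(x_k) \subset S_X$, and $d' > d$, one may extract a subsequence $(x_{k_j})$ so that $(y_1,\ldots,y_n,x_{k_j})$ is $d'$-basic for every $j$. With this in hand, I would establish by transfinite induction on $\alpha$ the statement
\begin{equation*}
(\ast)_\alpha\colon\ \text{for every } d \in (1,c) \text{ and every } \bx \in \cA^{(\alpha)} \text{ that is empty or } d\text{-basic, one has } \bx \in \cA_c^{(\alpha)}.
\end{equation*}
The base case is immediate since $d < c$. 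For the successor step, given such an $\bx \in \cA^{(\alpha+1)}$, choose a weakly null $(x_k) \subset S_X$ with $(\bx,x_k) \in \cA^{(\alpha)}$. If $\bx$ is empty the singletons $(x_k)$ are automatically $1$-basic and no selection is required; if $\bx$ is $d$-basic, pick $d' \in (d,c)$ and apply the selection principle to obtain $(x_{k_j})$ with $(\bx,x_{k_j})$ being $d'$-basic. In either case $(\bx,x_{k_j}) \in \cA^{(\alpha)}$, the induction hypothesis applied at level $\alpha$ with parameter $d'$ places each $(\bx,x_{k_j})$ in $\cA_c^{(\alpha)}$, and the weak nullity of the subsequence yields $\bx \in \cA_c^{(\alpha+1)}$. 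The limit step is a routine intersection. Specializing $(\ast)_\alpha$ to $\bx = \emptyset$ converts $\emptyset \in \cA^{(\alpha)}$ into $\emptyset \in \cA_c^{(\alpha)}$ for every $\alpha$, which gives the desired inequality.

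The point requiring most care, and the main potential obstacle, is precisely the parameter slack $d \in (1,c)$ in the induction statement: fixing $d = c$ would leave no room for the strict improvement $d < d' < c$ demanded by the Bessaga--Pe\l czy\'nski selection, and the induction would fail to close on the successor step. The selection principle itself is standard, obtained by Hahn--Banach-extending the biorthogonal functionals of $(y_1,\ldots,y_n)$ and exploiting the weak nullity of $(x_k)$ through a diagonal argument to make the extensions evaluate arbitrarily small on the chosen tail, so the entire scheme reduces to quoting this classical fact and setting up the induction with the right parametrization.
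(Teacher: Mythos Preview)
Your argument is correct and rests on the same underlying idea as the paper's: a Bessaga--Pe\l czy\'nski-type selection that, at the cost of an arbitrarily small loss in the basis constant, allows one to append a term of a weakly null sequence to a given finite basic sequence. The organizational difference is this: the paper carries out its induction on the statement ``$\cA^{(\alpha)}\neq\emptyset\Rightarrow \cA^{(\alpha)}_{(F,c)}\neq\emptyset$ for every finite-dimensional $F\subset X$ and every $c>1$'', where $\cA_{(F,c)}$ is the sub-H-tree of sequences in $\cA$ that are $c$-basic relative to $F$; the successor step enlarges $F$ to $F_j=\spa(F\cup\{y_j\})$ and replaces $c$ by $c'=\sqrt{c}$, recovering the desired result at $F=\{0\}$. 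You instead quantify over a slack parameter $d\in(1,c)$ and over $d$-basic prefixes $\bx$, and prove directly that such $\bx$ land in $\cA_c^{(\alpha)}$, raising $d$ to some $d'<c$ at each successor step. The two parametrizations are interchangeable---your prefix $\bx$ plays the role of the paper's subspace $F$, and your strict inequality $d<c$ plays the role of the paper's universal quantifier over $c>1$---so this is a reorganization rather than a genuinely different method; your version has the minor advantage of avoiding the auxiliary trees $\cA_{(F,c)}$, while the paper's version makes the uniform selection step (over the sphere of a finite-dimensional space) slightly more explicit.
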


\begin{proof} For $c>1 $ and a  finite dimensional subspace $F$ of  $X$  we put
$$\cA_{(F,c)}=\left\{(x_1,x_2,\ldots x_n)\in \cA: \begin{matrix}
                                                                         \big\|a_0 y_0 + \sum_{i=1}^ma_i  x_i\big\|\le c  \big\|a_0 y_0 + \sum_{i=1}^n a_i x_i\big\|_{\phantom{A}}\\
                                                                                                                                              \text{for all } y_0\in  F, \,(a_i)_{i=0}^n\subset \R,
                                                                            \text{ and }0\le m\le n\,\, \end{matrix} \right\}.$$
By transfinite induction we will show that for all $\alpha<\omega_1$, if $\cA^{(\alpha)}\not=\emptyset $, then
 $\cA^{(\alpha)}_{(F,c)}\not=\emptyset $, for all $c>1$ and all finite dimensional subspaces $F\subset X$. Then our claim follows simply by letting $F=\{0\}$.

 If $\cA=\cA^{(0)}\not=\emptyset$, then $\emptyset\in \cA$ and thus
 $\emptyset\in \cA_{(F,c)}$, for all $c>1$ and all finite dimensional subspaces $F\subset X$.

 Assume that our claim is true for  some ordinal $\alpha$ and assume that $\cA^{(\alpha+1)}\not=\emptyset $. Let $F\subset X$ be finite dimensional and
 $c>1$. Choose $c'=\sqrt{c}$. Since $\emptyset \in \cA^{(\alpha+1)}$, there exists a weakly null sequence $(y_j)\subset \cA^{(\alpha)}$
 and, thus, $\big(\cA(y_j)\big)^{(\alpha)}=\cA^{(\alpha)}(y_j)  \not=\emptyset$, for all $j\kin\N$.
 Put $F_j=\spa(F\cup\{y_j\})$, for  $j\kin\N$. From the
 induction hypothesis we deduce that
 $\big(\cA(y_j)\big)^{(\alpha)}_{(F_j,c')}\not=\emptyset$, for all $j\in\N$. But now we note that
 $(x_1,\ldots,x_n)\in \big(\cA(y_j)\big)^{(\alpha)}_{(F_j,c')}$, for some $j\in\N$, means that $(x_1,x_2,\ldots x_n)\in \big(\cA(y_j)\big)^{(\alpha)}$
 and $\big\|a_0 y + \sum_{i=1}^mx_i\big\|\le c'  \big\|a_0 y + \sum_{i=1}^n x_i\big\|$, for all  $y\in {F_j}, \,(a_i)_{i=0}^n\subset \R,
                                                                            \text{ and } m\le n $. The first condition means that $(y_j,x_1,\ldots x_n)\in \cA^{(\alpha)}  $. Since $(y_j)$ is weakly null the second condition implies for large enough $j_0\in\N$ and $j\ge j_0$ that
   \begin{align*}
   &\big\|b_0 y\big\|\le c' \big\|b_0 y+b_1 y_j\big\|\le c' c'                   \Big\|b_0 y+b_1 y_j+\sum_{i=1}^n a_i x_i\Big\|,
      \end{align*}
for all $y\kin F$ and $b_0,b_1,a_1,a_2,\ldots a_n\kin \R$.
  Thus
   $(y_j, x_1,\ldots x_n)\in \cA^{(\alpha)}_{(c,F)}$ for all $j\ge j_0$. We deduce that $\cA^{(\alpha+1)}_{(c,F)}\not=\emptyset$, which finishes the
   induction step   for successor ordinals. If $\lambda$ is a limit ordinal and
   $\cA^{(\lambda)}\not=\emptyset$ it follows that $\emptyset \in\cA^{(\alpha)}$,  for all $\alpha<\lambda$, and thus, by the induction hypothesis
    $\emptyset \in\cA^{(\alpha)}_{(F,c)}$ for any $c\kgr1$ and
   finite dimensional  subspace $F\ksubset X$, which implies that      $\emptyset \in\bigcap_{\alpha<\lambda} \cA^{(\alpha)}_{(F,c)}= \cA^{(\lambda)}_{(F,c)}$.
   This finishes the induction step, and the proof of our claim.  \end{proof}

The following Lemma compares   the weak index of trees which are in a certain sense close to each other.
\begin{lem}\label{L:2.4}
Let $X$ and $Y$ be subspaces of a Banach space $Z$,  with  $X^*$ and $Y^*$ being  separable, and  let $\vp>0$.
Assume that   $\dist(x,Y)<\vp\|x\|$, for each $x\in X$.

Then it follows for any H-tree  $\cA$ on $S_X$, with $I_w(\cA)<\infty$, that
$I_w(\cA)\le I_w(\cB)$, where
$$\cB=\big\{(y_1,y_2,\ldots y_n)\in S_Y^{<\omega}: \exists (x_1,x_2,\ldots x_n)\in \cA\quad\|x_j-y_j\|\le 4\vp,\text{ for }j=1,2\ldots n\big\}.$$

\end{lem}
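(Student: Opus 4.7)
My plan is to prove Lemma \ref{L:2.4} by transfinite induction establishing the stronger statement: for every ordinal $\alpha$, whenever $\bx=(x_1,\ldots,x_n)\in\cA^{(\alpha)}$ and $\by=(y_1,\ldots,y_n)\in S_Y^{<\omega}$ satisfies $\|x_i-y_i\|\le 4\vp$ for every $i$, then $\by\in\cB^{(\alpha)}$. (Note that $\by\in\cB$ is automatic, with $\bx$ witnessing the existence condition.) Applying this to $\bx=\by=\emptyset$ at $\alpha=I_w(\cA)-1$, which is a legitimate successor ordinal by the remark before Proposition \ref{P:2.3} since $I_w(\cA)<\infty$, gives $\emptyset\in\cB^{(\alpha)}$, hence $I_w(\cA)\le I_w(\cB)$.

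The base case $\alpha=0$ is immediate from the definition of $\cB$, and the limit case is handled via $\cB^{(\lambda)}=\bigcap_{\beta<\lambda}\cB^{(\beta)}$. The successor step is the heart of the proof. Given $\bx\in\cA^{(\alpha+1)}$ and admissible $\by$, the H-tree property of $\cA$ produces a weakly null $(\xi_k)\subset S_X$ with $(\bx,\xi_k)\in\cA^{(\alpha)}$ for every $k$. It then suffices to construct, after passing to a subsequence of $(\xi_k)$, a weakly null sequence $(y_k^{\natural})\subset S_Y$ with $\|\xi_k-y_k^{\natural}\|\le 4\vp$, because the inductive hypothesis will then yield $(\by,y_k^{\natural})\in\cB^{(\alpha)}$ and hence $\by\in\cB^{(\alpha+1)}$.

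To build $(y_k^{\natural})$, I pick $\eta_k\in Y$ with $\|\xi_k-\eta_k\|<\vp$, so that $\|\eta_k\|\in(1-\vp,1+\vp)$. Because $Y^*$ is separable, $Y$ contains no isomorphic copy of $\ell_1$, and Rosenthal's theorem provides a weakly Cauchy subsequence $(\eta_{k_j})$ with weak$^*$ limit $L\in Y^{**}$. Extending any $y^*\in Y^*$ via Hahn-Banach to $z^*\in Z^*$ with $\|z^*\|=\|y^*\|$, then using $\xi_{k_j}\to 0$ weakly in $Z$ together with $|z^*(\eta_{k_j}-\xi_{k_j})|\le\vp\|y^*\|$, gives $\|L\|_{Y^{**}}\le\vp$. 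Granting that $L$ is represented by an element $l\in Y$ with $\|l\|\le\vp$, the normalized vectors $y_j^{\natural}:=(\eta_{k_j}-l)/\|\eta_{k_j}-l\|$ lie in $S_Y$, are weakly null in $Y$ (since $\eta_{k_j}-l$ is weakly null in $Y$ with normalizing factors bounded away from zero), and the triangle inequality combined with $|\|\eta_{k_j}-l\|-1|\le 2\vp$ yields $\|\xi_{k_j}-y_j^{\natural}\|\le 4\vp$.

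The main obstacle is verifying that $L$ lies in the canonical image of $Y$ in $Y^{**}$ rather than merely in $Y^{**}$, since for a general Asplund space weakly Cauchy sequences need not converge weakly. I plan to address this by transferring the analysis to the separable Asplund subspace $Z_0:=\overline{X+Y}\subset Z$, whose dual is separable (as $Z_0$ is both separable and Asplund). The auxiliary sequence $(\eta_{k_j}-\xi_{k_j})\subset Z_0$ has norm at most $\vp$, and a further Rosenthal extraction in $Z_0$ combined with the weak nullity of $(\xi_{k_j})$ in $Z_0$ tracks the weak$^*$ limit of $(\eta_{k_j})$ in $Z_0^{**}$; identifying this limit with the image of $L$ under the canonical isometric embedding $Y^{**}\hookrightarrow Z_0^{**}$, and exploiting that $Y$ is weakly closed in $Z_0$ by Mazur's theorem, should exhibit $L$ as the canonical image of the desired $l\in Y$. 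This identification is precisely the nonlinear transfer technique highlighted at the opening of Section \ref{S:2}, and it uses the quantitative distance bound together with the separability of both duals in an essential way.
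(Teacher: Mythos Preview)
Your inductive scheme --- proving directly that $\bx\in\cA^{(\alpha)}$ together with $\|x_i-y_i\|\le4\vp$ for all $i$ implies $\by\in\cB^{(\alpha)}$ --- is sound and in fact streamlines the paper's Claim~2, which instead builds a subtree $\tilde\cA\subset\cA$ of full index and an explicit order isomorphism $\Psi\colon\tilde\cA\to\cB$ in order to invoke Proposition~\ref{P:2.1}. Both routes reduce the lemma to the same core step (the paper's Claim~1): from a weakly null $(\xi_k)\subset S_X$, produce after extraction a weakly null $(y_k^\natural)\subset S_Y$ with $\|\xi_k-y_k^\natural\|\le4\vp$.

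Your execution of that core step has a genuine gap. You need the weak$^*$ limit $L\in Y^{**}$ of the weakly Cauchy $(\eta_{k_j})$ to lie in $Y$, and this fails in general. Concretely, take $X=Y=Z=c_0$, $\xi_k=e_k$, and $\eta_k=e_k+\vp'\sum_{i<k}e_i$ for some $0<\vp'<\vp$; then $\|\eta_k-\xi_k\|_{c_0}=\vp'<\vp$, yet $(\eta_k)$ is already weakly Cauchy with weak$^*$ limit $L=\vp'(1,1,\dots)\in\ell_\infty\setminus c_0$. Your proposed rescue via $Z_0=\overline{X+Y}$ does not work either: first, nothing in the hypotheses forces $Z_0^*$ to be separable (separability of $X^*$ and $Y^*$ does not pass to the dual of a closed internal sum, and you give no argument); second, and decisively, even when $Z_0$ is Asplund, weakly Cauchy sequences need not converge weakly --- the very same $c_0$ example applies --- so ``$Y$ weakly closed in $Z_0$'' cannot place a weak$^*$ limit in $Z_0^{**}$ back into $Y$.

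The paper sidesteps this entirely by never claiming the limit lies in $Y$. Having located $z^{**}\in\vp B_{Y^{**}}$ (via the $\sigma(Z^{**},Z^*)$-closedness of $Y^{**}$ in $Z^{**}$), it invokes Goldstine together with the metrizability of $\sigma(Y^{**},Y^*)$ on bounded sets (this is where separability of $Y^*$ enters) to pick a \emph{sequence} $(u_n)\subset\vp B_Y$ with $u_n\to z^{**}$ weak$^*$, and then diagonalizes to obtain indices $m(k),n(k)$ with $\eta_{m(k)}-u_{n(k)}\to 0$ weakly in $Y$. Replacing your hypothetical fixed $l\in Y$ by the varying Goldstine approximant $u_{n(k)}\in\vp B_Y$ and normalizing gives the required $(y_k^\natural)$, with exactly the $4\vp$ arithmetic you wrote down; plugging this into your cleaner induction then finishes the lemma.
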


\begin{proof}We first prove  the following

\noindent{\bf Claim 1.}  For every weakly null sequence $(x_j) \subset S_X$ there is a  subsequence $(x'_k)$ of  $(x_j)$ and a weakly null sequence $(y_k)$ in $S_Y$
so that $\|x'_k-y_k\|\le 4 \vp$.

For a Banach space $U$ we denote the weak topology on $U$ by $\sigma(U,U^*)$ and the weak$^*$ topology on $U^*$ by
$\sigma(U^*,U)$.
By assumption we can find  $\xt_j\in Y$, for every $j\kin\N$, with $\|\xt_j-x_j\|<\vp$. We choose an element
$$z^{**}\in \bigcap_{n=1}^\infty \overline{ \{\xt_j-x_j: j\ge n\big\}}^{\sigma(Z^{**},Z^*)}\subset \vp B_{Z^{**}}$$
(considering $Z$ as a subspace of $Z^{**}$ via the canonical map).
We let  $I=\N\times\cU$, where $ \cU$ is a neighborhood basis of $0$ in $\sigma(Z^{**}, Z^*)$, and consider the order  on $I$ defined  by
$(n,U)\le (n',U')$ if and only if $n\le n'$ and $U\supset U'$. We pick for every $\iota=(n,U)\in I $
 an element  $\xt_\iota-x_\iota\in    \{\xt_j-x_j: j\ge n\big\} \cap (z^{**}+U)   $ and note that
  $(\xt_\iota-x_\iota:\iota\in I)$ is a net which  $\sigma(Z^{**},Z^*)$-converges
 to $z^{**}$. Since $(x_j)$ is $\sigma(X,X^*)$-null, it follows that
$  \sigma(Z^{**},Z^*)-\lim_{\iota\in I} x_\iota=0$, and thus, since $Y^{**}$ is $\sigma(Z^{**}, Z^*)$-closed in $Z^{**}$,
$$z^{**}= \sigma(Z^{**},Z^*)-\lim_{\iota\in I} \xt_\iota-x_\iota=\sigma(Z^{**},Z^*)-\lim_{\iota\in I} \xt_\iota \in Y^{**}.$$
Since $Y^*$ is separable the $\sigma(Y^{**},Y^*)$-topology is metrizable on $B_{Y^{**}}$, and we can find by
Goldstine's Theorem  a sequence
$(u_n)\subset \vp B_Y$ which $\sigma( Y^{**},Y^*)$- converges to $z^{**}$. This implies that
$0\in \bigcap_{n\in\N} \overline{\{\xt_j-u_k: j,k\ge n\}}^{\sigma(Y^{**},Y^*)}$, and using again the separability of $Y^*$ we can find strictly increasing
sequences $m(k)$ and $n(k)$ such that $(\xt_{m(k)}-u_{n(k)})_{k\in\N}$ converges in $\sigma(Y,Y^*)$  to $0$.
We deduce now our claim by letting $x'_k =x_{m(k)}$, and $y_k= (\xt_{m(k)}-u_{n(k)})/\|\xt_{m(k)}-u_{n(k)}\|$, and  noting
that
$$\Big\|x_{m(k)}-\frac{\xt_{m(k)}-u_{n(k)}}{\|\xt_{m(k)}-u_{n(k)}\|}\Big\|\le \|x_{m(k)}-\xt_{m(k)}\|+\|u_{n(k)}\|+
\big|\|\xt_{m(k)}-u_{n(k)}\|-1\|\big|\le 4\vp.$$

Next we prove the following claim by transfinite induction for all ordinals $\alpha$, which  will yield, together with Proposition \ref{P:2.1}, the assertion of our lemma.

\noindent{\bf Claim 2.} For any  H-tree $\cA$  on $S_X$,  with $I_w(\cA)= \alpha+1$, there exist a subtree
$\tilde \cA $ of $\cA$, and a length preserving order isomorphism $\Psi: \tilde A\to \cB$,
so that
\begin{align}
\label{E:2.4.1} &I_w(\tilde \cA)=I_w(\cA)=\alpha+1,\text{ and } \\
\label{E:2.4.2} &\Psi\text{ satisfies condition \eqref{E:2.1.1} of Proposition \ref{P:2.1}}. \qquad\qquad\qquad\qquad\qquad\qquad
\end{align}
If  $\alpha=0$ and $I_w(\cA)=1$, we simply can take $\tilde A=\{\emptyset\}$ and put $\Psi(\emptyset)=\emptyset$.
Assume now that our claim is true for $\alpha$ and  that $\cA$ is an H-tree with $I_w(\cA)= \alpha+2$.
We deduce, that $\emptyset \in \cA^{(\alpha+1)}$ and that there is a weakly null sequence  $(x_k)_{k\in\N}\subset S_X$, so that $x_k=(\emptyset, x_k)\in \cA^{(\alpha)} $, which means that $I_w(\cA(x_k))\ge \alpha+1$, for $k\in\N$.

After passing to a subsequence of $(x_k)$ we can, using  Claim 1, assume that there is  a weakly null sequence $(y_k)\subset S_X$ so that
$\|x_k-y_k\|\le 4\vp$, for all $k\kin\N$. After passing to a cofinite subsequence
of  $(x_k)$ we can assume that $I_w(\cA(x_k))= \alpha+1$, for all $k\in\N$. Indeed, otherwise we could pass to a subsequence $(x'_k)$ of
$(x_k)$, so that  $I_w(\cA(x'_k))\ge  \alpha+2$, for all $k\in\N$, which would imply that $x'_k\in \cA^{(\alpha+1)}$ for all $k\in\N$, and thus
 $\emptyset \in  \cA^{(\alpha+2)}$, which would mean that $I_w(\cA)\ge  \alpha+3$, a contradiction.

Applying the inductive hypothesis we find for every $k\kin\N$ a subtree $\tilde\cA_k$ of $\cA(x_k)$,
 with $I_w(\cA_k)=I_w(\tilde\cA_k)=\alpha+1$,
 and a length preserving isomorphism $\Psi_k: \tilde \cA_k\to \cB$, which satisfies \eqref{E:2.4.2}.

We glue these trees, and isomorphisms  together by letting
 \begin{align*}
 &\tilde \cA=\big\{ (x_k,\bx^{(k)}): k\in\N\text{ and } \bx^{(k)}\in \tilde \cA_k\big\}\cup\{\emptyset\}\text{ and } \\
 &\Psi: \tilde\cA \to \cB,\quad \bx\mapsto \begin{cases} \emptyset &\text{if } \bx=\emptyset, \\
                                                                              \big(y_k,\Psi(\bx^{(k)})\big) &\text{if $\bx\keq(x_k,\bx^{(k)})$, for some $k\kin\N$ and $\bx^{(k)}\kin \tilde \cA_k$}.
                                                                               \end{cases}\end{align*}
It is now routine to verify that $\tilde \cA$, $\cB$ and $\Psi$ satisfy  conditions
\eqref{E:2.4.1} and \eqref{E:2.4.2}.

In the case that $\alpha$ is a limit ordinal and we assume that our claim holds for all $\alpha'<\alpha $ we proceed as follows.
Assume that $I_w(\cA)=\alpha+1$.
Let $(\alpha_n)$ be a sequence in $[0,\alpha )$ which increases to $\alpha $. For each $n\in\N$, we can pick   a weakly null sequence $( u_{(n,j)})_{j\in\N}\subset S_X$, so that
 $u_{(n,j)}\in \cA^{(\alpha_n)}$, for all $n,j\in\N$. Since  $X^*$ is separable, the weak topology on $B_X$ is metrizable, and we can
find a diagonal sequence  $(x_n)=(u_{(n,j_n)})$ which is also weakly null. It follows that   $I_w(\cA(x_n))\ge \alpha_n$, for all $n\in\N$.
 After passing to a subsequence of $ (x_n)$ we can assume, again using Claim 1, that there is a weakly null sequence $(y_n)\subset S_Y$,
 so that $\||x_n-y_n\|\le 4\vp$, for all $n\in\N$.
After passing to  a cofinite subsequence of $(x_n)$ we can assume that $I_w(\cA(x_n))< \alpha $, for all $n\in\N$.
 Indeed, otherwise there is an infinite subsequence $(x'_n)$ of $(x_n)$, so that $I_w(\cA(x'_n))\ge  \alpha +1$
 (recall that $I_w(\cdot)$ takes only values among the successor ordinals), and thus $x'_n\in\cA^{(\alpha )}$, for all $n\in\N$, which implies that
$I_w(\cA)\ge \alpha +2$,  a contradiction.

We apply the inductive hypothesis  for each $n\in\N$ to $\cA_n$ in order to obtain a subtree $\tilde\cA_n$ of $\cA(x_n)$,
 with $I_w(\tilde \cA_n)=  I_w(\cA(x_n))$,  a tree $\cB_n$ on $S_Y$, and
 an  order isomorphism from $\tilde \cA_n$ onto  $\cB_n$,  so that the conditions \eqref{E:2.4.1}, and  \eqref{E:2.4.2} are satisfied.
We now can define $\tilde\cA$, $\cB$ and $\Psi$ as before to verify our claim   in the case that $\alpha$ is a limit ordinal.
\end{proof}
\begin{cor}\label{C:2.5}
Let $X$ and $Z$  be  Banach spaces and   $Y$  be  a  subspace of $Z$.
 Assume that $Y^*$ and $X^*$  are separable and assume   for some $\rho\in (0,1)$ and $\vp\in (0,\rho/6)$
 there is an embedding $i:X\to Z$, with  $\|i\|\cdot\|i^{-1}\|\le 1+\vp$, so that
 $\dist(i(x),Y)\le \vp\|x\|$, for all $x\in X$.
 Then
 \begin{equation}\label{E:2.5.1}
 I_w\big( \cF_\rho^X\big)\le  I_w\big( \cF_{\rho-6\vp}^Y\big).
 \end{equation}

\end{cor}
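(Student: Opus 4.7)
The strategy is a two-step transfer. First move $\cF_\rho^X$ from $X$ to its isomorphic copy $\tilde X:=i(X)\subset Z$; then apply Lemma \ref{L:2.4} to pass from $\tilde X$ to $Y$ inside $Z$. Only a routine tracking of constants is needed to land at $\rho-6\vp$.

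After a harmless rescaling of $i$ (which preserves $\|i\|\cdot\|i^{-1}\|\le 1+\vp$), one may assume $\|i\|\le 1$ and $\|i^{-1}\|\le 1+\vp$, so that $\|i(x)\|_Z/\|x\|_X\in[1/(1+\vp),1]$ for every nonzero $x\in X$. Define the length-preserving transfer
\[
\Psi:\cF_\rho^X\to S_{\tilde X}^{<\omega},\qquad (x_1,\ldots,x_n)\mapsto\bigl(i(x_1)/\|i(x_1)\|,\ldots,i(x_n)/\|i(x_n)\|\bigr).
\]
This is an order isomorphism onto its image, and since $i$ is a bounded linear operator and the normalizing factors $\|i(x_j)\|$ are uniformly bounded away from $0$ and $\infty$, $\Psi$ carries weakly null sequences in $S_X$ to weakly null sequences in $S_{\tilde X}$. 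A direct calculation with $\tilde a_j:=a_j/\|i(x_j)\|$ and $\|i(u)\|_Z\ge \|u\|_X/\|i^{-1}\|$ yields $\Psi(\cF_\rho^X)\subset \cF_{\rho/(1+\vp)}^{\tilde X}$. Proposition \ref{P:2.1} combined with monotonicity of $I_w$ under inclusion of trees then gives $I_w(\cF_\rho^X)\le I_w(\cF_{\rho/(1+\vp)}^{\tilde X})$.

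For the second step, $\cF_{\rho/(1+\vp)}^{\tilde X}$ is an H-tree of finite weak index (Theorem \ref{T:2.2} applied to $\tilde X$, whose dual is separable since $\tilde X\simeq X$), and the hypothesis $\dist(i(x),Y)\le \vp\|x\|$ translates into $\dist(\tilde x,Y)\le \vp'\|\tilde x\|$ for a constant $\vp'$ close to $\vp$ (explicitly $\vp'=\vp(1+\vp)$ under the above normalization). Applying Lemma \ref{L:2.4} to the pair $\tilde X,Y\subset Z$ produces $I_w(\cF_{\rho/(1+\vp)}^{\tilde X})\le I_w(\cB)$, where
\[
\cB=\bigl\{(y_j)\in S_Y^{<\omega}:\exists\,(\tilde x_j)\in \cF_{\rho/(1+\vp)}^{\tilde X},\ \|\tilde x_j-y_j\|\le 4\vp'\bigr\}.
\]
A triangle-inequality bound places $\cB\subset \cF_{\rho/(1+\vp)-4\vp'}^Y$, and a direct computation using $\rho\le 1$ and $\vp<\rho/6$ shows that $\rho/(1+\vp)-4\vp'\ge \rho-6\vp$ with strict positive slack. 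One final invocation of the monotonicity of $I_w$ closes the chain $I_w(\cF_\rho^X)\le I_w(\cF_{\rho-6\vp}^Y)$.

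The only minor subtlety is that Lemma \ref{L:2.4} is stated with strict inequality $\dist(x,Y)<\vp\|x\|$, while the hypothesis only yields $\le$; this is absorbed by the strict gap above, by applying the lemma with $\vp'+\delta$ for arbitrarily small $\delta>0$. I do not expect any step to present a genuine obstacle, since the main technical content resides entirely in Lemma \ref{L:2.4}.
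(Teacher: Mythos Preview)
Your proposal is correct and follows essentially the same two-step route as the paper: first transfer $\cF_\rho^X$ to a tree on $S_{i(X)}$ via the normalized images $i(x_j)/\|i(x_j)\|$ (the paper lands in $\cF_{\rho-2\vp}^{i(X)}$ rather than your $\cF_{\rho/(1+\vp)}^{i(X)}$, but the computations are equivalent), then apply Lemma~\ref{L:2.4} from $i(X)$ to $Y$ and check that the resulting $\cB$ sits inside $\cF_{\rho-6\vp}^Y$. You are in fact slightly more careful than the paper about the normalization of $i$, the passage from $\dist(i(x),Y)\le\vp\|x\|_X$ to a bound in terms of $\|i(x)\|_Z$, and the strict-versus-nonstrict inequality in the hypothesis of Lemma~\ref{L:2.4}; all of this is handled correctly and is absorbed by the slack in the constant $\rho-6\vp$.
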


\begin{proof}  If $(x_1,x_2,\ldots x_n)\in \cF^X_\rho$, and we let $z_j=i(x_j)/\|i(x_j)\|$ for $j=1,2\ldots n$, we deduce that
for $(a_j)_{i=1}^n\subset [0,\infty)$ that
\begin{align*}
\Big\|\sum_{j=1}^n a_j z_j \Big\| &\ge \Big\| \sum_{j=1}^n a_j i(x_j)\Big\| - \sum_{j=1}^n a_j \big| \|i(x)\|\!-\!1\big|
\ge\Big( \frac{\rho}{1\!+\!\vp} \!-\!\vp\Big) \sum_{j=1}^n a_j  \ge( \rho\!-\!2\vp) \sum_{j=1}^n a_j.
\end{align*}
It follows therefore that
$I_w(\cF^X_\rho)\le I_w(\cF^{i(X)}_{\rho-2\vp})$.  Replacing $X$ by $i(X)$,  and $\rho$ by $\rho-2\vp$, we   can assume that  $X$ is a subspace of $Z$
and need to show that  $I_w\big( \cF_\rho^X\big)\le  I_w\big( \cF_{\rho-4\vp}^Y\big)$.

We apply Lemma \ref{L:2.4} to $\cA=\cF_\rho^X$ (recall that $\cF_\rho^X$ is an H-tree) and note that  the tree $\cB$ on $S_Y$, as defined in Lemma \ref{L:2.4} is a subtree
of   $ \cF_{\rho-4\vp}^Y$. Indeed, if $(y_1,y_2,\ldots y_n)\in \cB$, and if $(x_1,x_2,\ldots x_n)\in\cF^X_\rho$, is such that
$\|x_j-y_j\|\le 4\vp$, then for all $(a_j)_{j=1}^n \subset [0,\infty)$,
$$\Big\| \sum_{j=1}^n a_j y_j \Big\|\ge  \Big\| \sum_{j=1}^n a_j x_j \Big\|- 4\vp\sum_{j=1}^n a_j\ge \rho \sum_{j=1}^n a_j - 4\vp\sum_{j=1}^n a_j=(\rho-4\vp) \sum_{j=1}^n a_j.$$
\end{proof}

The conditions described by our previous  Lemma \ref{L:2.4} and Corollary \ref{C:2.5}  are fulfilled
in the situation described by the next theorem, which is essentially due to
Zippin \cite[Theorem 1.2]{Zi}. Our formulation is
explicitly due to Benyamini \cite[page 27]{Be}.

\begin{thm}\label{T:2.6}
Let $X$ be a space with separable dual  and $0<\vp<\frac12$, and let $K$ be a $w^*$-closed and  totally disconnected
subset of $B_{X^*}$ which is $(1-\vp)$-norming $X$.

Then
there exist
$\beta<\omega^{\sz (X,\frac\vp8)+1}$
and   a subspace $Y$ of $ C(K)$, isometric to
$C([0,\beta])$, so that
$$
\dist(i(x),Y)\le2\vp\|x\|,\text{ for $x\in X$},
$$
where $i:X\to C(K)$ is the embedding defined by $i(x)(x^*)\keq x^*(x)$, for $x^*\kin K$ and $x\kin X$.
\end{thm}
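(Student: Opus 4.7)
The plan is to construct a continuous surjection $\phi:K\to[0,\beta]$ for some countable ordinal $\beta<\omega^{\sz(X,\vp/8)+1}$ and to set $Y:=\{f\circ\phi:f\in C([0,\beta])\}$. Since $\phi$ is a continuous surjection between compact Hausdorff spaces, the pullback $f\mapsto f\circ\phi$ is a linear isometric embedding of $C([0,\beta])$ into $C(K)$, so $Y$ is isometric to $C([0,\beta])$. The fibers of $\phi$ will be arranged so that on each fiber the evaluation $x^*\mapsto x^*(x)$ has oscillation less than $\vp/8$ for every $x\in B_X$, which yields the approximation bound $\dist(i(x),Y)\le 2\vp\|x\|$.

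Write $\gamma:=\sz(X,\vp/8)$; since the Szlenk derivation at level $\vp/8$ on $B_{X^*}$ vanishes at stage $\gamma$ and $K\subset B_{X^*}$, the same holds on $K$. For each $\alpha<\gamma$ the layer $L_\alpha:=s_{\vp/8}^\alpha(K)\setminus s_{\vp/8}^{\alpha+1}(K)$ is covered by relatively $w^*$-open subsets of $s_{\vp/8}^\alpha(K)$ of norm-diameter less than $\vp/8$. Since $X^*$ is separable, $K$ is $w^*$-metrizable, and since $K$ is totally disconnected its clopen subsets form a basis, so one can refine this cover to a countable disjoint family of clopen subsets of $K$. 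I would build by transfinite recursion a tree $\cT$ of clopen subsets of $K$ whose root is $K$ and in which the children of a node $U$ at level $\alpha$ form a countable clopen partition of the open subset $U\cap L_\alpha$ by pieces of norm-diameter less than $\vp/8$; the closed remainder $U\cap s_{\vp/8}^{\alpha+1}(K)$ is then decomposed further at level $\alpha+1$. At limit ordinals one takes intersections. The recursion terminates at depth $\gamma$, and a lexicographic enumeration of the nodes of $\cT$ produces an ordinal $\beta<\omega^{\gamma+1}$ and a continuous surjection $\phi:K\to[0,\beta]$.

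For the approximation, fix $x\in X$ of norm one, pick a representative $x^*_\xi\in\phi^{-1}(\xi)$ for each isolated $\xi\in[0,\beta]$, and define $\tilde f\in C([0,\beta])$ by $\tilde f(\xi):=x^*_\xi(x)$ at isolated ordinals, extended by continuity at limits. On each leaf clopen cell of $\cT$, $|x^*(x)-x^*_\xi(x)|\le\|x^*-x^*_\xi\|\cdot\|x\|<\vp/8$, hence $\|i(x)-\tilde f\circ\phi\|_{C(K)}<\vp/8<2\vp$. The $(1-\vp)$-norming hypothesis enters in ensuring that $i:X\to C(K)$ is an isomorphic embedding so the distance statement is meaningful. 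The main obstacle is the coordinated clopen refinement: at successor stages one must partition clopen nodes into clopen pieces simultaneously respecting the diameter bound on $L_\alpha$ and leaving the next derivative coherent for the subsequent stage, while at limit ordinals nested intersections must yield a consistent partition giving rise to a genuinely continuous $\phi$. A careful bookkeeping of the countable branching at each of the $\gamma$ levels is what pins down the bound $\beta<\omega^{\gamma+1}$.
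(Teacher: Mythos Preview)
The paper does not give a proof of this theorem at all: it is quoted as a known result, attributed to Zippin \cite{Zi} with the present formulation taken from Benyamini \cite{Be}, and is followed only by a remark on the relation between the index $\eta$ used in those references and $\sz$. So there is no ``paper's own proof'' to compare your sketch against; your outline is in fact a reasonable summary of the Zippin--Benyamini argument itself.

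That said, two steps in your sketch are genuine difficulties, not routine bookkeeping. First, ``a lexicographic enumeration of the nodes of $\cT$ produces \dots\ a continuous surjection $\phi:K\to[0,\beta]$'' hides the real work: continuity of $\phi$ forces the preimage of every initial segment $[0,\xi]$ to be closed (equivalently clopen) in $K$, so the ordering of the leaves must be compatible with the nesting of the clopen nodes at every stage, including limit stages where you pass to intersections. One has to set up the tree so that each node carries a well-ordering of its children and the induced global order on leaves has exactly this property; this is where total disconnectedness of $K$ and the clopen nature of the nodes are used in an essential way. Second, defining $\tilde f$ on isolated ordinals and then ``extending by continuity'' begs the question: you must check that the values $x^*_\eta(x)$ actually converge as $\eta\nearrow\xi$ for limit $\xi$, which again comes from the tree structure (the fibers $\phi^{-1}(\eta)$ for $\eta$ close to $\xi$ lie in a common small-diameter clopen piece). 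Once those two points are nailed down your argument gives the stated bound; the $(1-\vp)$-norming hypothesis is indeed only needed to make $i$ an isomorphic embedding for the subsequent application, not for the approximation estimate itself.
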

\begin{rem} The proof of  \cite[Theorem 1.2]{Zi} shows that for any Banach space $X$ with a separable dual, and $\vp>0$,
 we can find a $w^*$-closed totally disconnected $(1-\vp)$-norming
subset of $B_{X^*}$. An explicit construction of such  a set
$K\subset B_{X^*}$ can also be found in \cite[Lemmas 1.3 and 1.4]{DFH}.

Let us also note that in \cite{Zi} and \cite{Be} another index $\eta(X,\vp)$ was used, replacing  in the statement
of Theorem \ref{T:2.6} our index $\sz(X,\vp)$. But since it was shown for  $\eta(\vp,X)$
in \cite[page 22]{AJO} (note that in \cite{AJO} $\eta(\cdot,\cdot)$ was called $\eta'(\cdot,\cdot)$, while  $\sz(\cdot,\cdot)$ was named $\eta(\cdot,\cdot)$)
 that  $\eta(X,\vp)\le \sz(X,\vp)\le \eta(X,\vp/2)$ for all $\vp>0$, our statement of  Theorem \ref{T:2.6} follows from the statement in \cite{Be}.
\end{rem}

The final key ingredient of our proof is the actual
computation of the dentability index of $C([0,\alpha])$,
$\alpha<\omega_1$, which was done in \cite[Proposition 12]{HLP}.
\begin{equation}\label{E:10}\text{Dz}(C([0,\omega^{\omega^\alpha}]))=
\sz(L_p(C([0,\omega^{\omega^\alpha}])))= \begin{cases}  \omega^{\alpha+2} &\text{ if $\alpha$ is finite,}\\
                                                            \omega^{\alpha+1}       &\text{ if $\alpha$ is an infinite ordinal.}\\
\end{cases}
\end{equation}

We are now ready for the proof of Theorem \ref{T:1.2}.

\begin{proof}[Proof of Theorem \ref{T:1.2}]
Lancien showed in \cite[Propositions 3.1 and 3.2]{La2} that $\sz(X)$ and $\dz(X)$  are
separably determined, provided they are countable, so
we may assume without loss of generality that $X$ is separable.

We will show that for any $\rho\in(0,1)$    and $1<p<\infty$ we
have
\begin{equation}\label{E:1.2.2}
I_w\big(\cF_\rho^{L_p(X)}\big)< \begin{cases} \omega^{\alpha} &\text{if  $\alpha$ is infinite,}\\
                     \omega^{\alpha+1} &\text{if $\alpha$ is finite.} \end{cases}
\end{equation}
Then our claim  follows from Theorem \ref{T:2.2}.

Let $\vp\in(0, \rho/12)$ and  apply Theorem  \ref{T:2.6},  which provides us with a $w^*$-closed, totally disconnected and $(1-\vp)$-norming $X$  subset $K$ of $B_{X^*}$, an ordinal  $\beta<\omega^{\sz(X,\vp/8)+1}$ and a subspace $Y\subset C(K)$, isometric to $C([0,\beta])$, so that   $\dist(i(x),Y)\le 2 \vp\|x\|$, for
all $x\in X$,  where $i: X\to C(K)$, is defined by $i(x)(x^*)=x^*(x)$, for $x^*\in K$ and $x\in X$.
Since $\beta<\omega^{\sz(X,\vp/8)}<\omega^{\omega^\alpha}$ equation
  \eqref{E:2.3}  yields that
 $\sz(Y)\le  \omega^{\alpha}$.
Indeed, if $\alpha=\gamma+1$ for some $\gamma<\omega_1$, then $\beta< \omega^{\omega^\gamma\cdot k}$ for some $k\in\N$ and we deduce
from   \eqref{E:2.3}  that
$\sz(Y)\le \sz \big(C([0, \omega^{\omega^\gamma\cdot k}\big])\big)= \sz \big(C([0, \omega^{\omega^\gamma }\big])\big)=
 \omega^{\gamma+1}=\omega^{\alpha} $. On the other hand, if
 $\alpha$ is a limit ordinal we deduce that $\beta<\omega^{\gamma}$, for some $\gamma<\alpha$, and we derive our claim the same way.

We define
 $$I: L_p(X)\to  L_p(C(K)), \qquad f\mapsto i\circ f,$$
and note that
$\|f\| (1-\vp) \le\| I(f) \|\le \|f\|$ for $f\in L_p(X)$. Observe  that  $L_p(Y)$ embeds naturally and isometrically into $L_p(C(K))$
and that
$\dist\big(I(f), L_p(Y)\big)\le 2\vp\|f\|$ for all $f\in L_p(X)$. The last observation follows easily for step functions and from the fact that $[0,1]$ with the Lebesgues measure is a probability space, and for general elements of $L_p(X)$ by approximation.

This means  that the spaces $\tilde X= L_p(X)$, $\tilde Y= L_p(Y)$ and $\tilde Z=L_p(C(K))$, and
 the isomorphic embedding  $I$ satisfy the conditions of Corollary \ref{C:2.5}  for $2\vp$ instead of $\vp$.
 We conclude therefore from   Corollary \ref{C:2.5} and \eqref{E:10}  that
 $$I_w\big( \cF^{L_p(X)}_\rho\big)\le I_w\big( \cF^{L_p(Y)}_{\rho-12\vp}\big)<\sz\big(L_p(Y)\big)\le
  \begin{cases}  \omega^{\alpha+1} &\text{ if $\alpha$ is finite,}\\
                                                            \omega^{\alpha}       &\text{ if $\alpha$ is an infinite ordinal.}\end{cases}
 $$ which proves our claim and finishes the proof of our theorem.
 \end{proof}

We remark that if $\alpha$ is finite and $\sz (X)=\omega^\alpha$ then
the precise value of $\text{Dz}(X)$ depends on the geometry of $X^*$. Indeed,  since
 $L_2(L_2(X))$ and $L_2(X)$ are isomorphic  for any Banach spaces $X$, it follows that
 $\sz(L_2(L_2(X))= L_2(X)$. But
  for $X=C[0,\omega^{\omega^\alpha}]$, where $\alpha$ is
  finite,   it was shown in  \cite[Theorem 2]{HLP} that $\text{Dz}(X)>\sz (X)$ and thus
  by Theorem \ref{T:1.2}
 $\text{Dz}(L_2(X))=\sz (L_2(X))= \omega^{\alpha+2}$.

\end{document}